\newtheorem{theorem}{Theorem}
\newtheorem{lemma}[theorem]{Lemma}
\newtheorem{corollary}[theorem]{Corollary}
\theoremstyle{definition}
\theoremstyle{remark}
\renewcommand{\leq}{\leqslant}
\renewcommand{\geq}{\geqslant}
\newcommand\SL{\operatorname{SL}}
\newcommand\Lip{\operatorname{Lip}}
\newcommand\str{\operatorname{str}}
\newcommand\sml{\operatorname{sml}}
\newcommand\unf{\operatorname{unf}}
\newcommand\smth{\operatorname{smth}}
\newcommand\irrat{\operatorname{irrat}}
\newcommand\rat{\operatorname{rat}}
\def\E{\mathbf{E}}
\def\cE{\mathcal{E}}
\def\cF{\mathcal{F}}
\def\R{\mathbf{R}}
\def\C{\mathbf{C}}
\def\Z{\mathbf{Z}}
\def\N{\mathbf{N}}
\def\T{\mathbf{T}}
\def\B{\mathcal{B}}
\def\eps{\varepsilon}
\renewcommand{\(}{\left(}
\renewcommand{\)}{\right)}
\def\eps{\varepsilon}
\begin{document}

\title{The abelian arithmetic regularity lemma}



\author{Sean Eberhard}
\email{eberhard.math@gmail.com}

\begin{abstract}
We introduce and prove the arithmetic regularity lemma of Green and Tao in the abelian case. This exposition may serve as an introduction to the general case.
\end{abstract}

\maketitle

The purpose of this note is to provide a brief, self-contained exposition and proof of the arithmetic regularity lemma of Green and Tao~\cite{areg} in the abelian ($s=1$) case, particularly in order to aid the reader of~\cite{EGM}, or to serve as an introduction to the general ($s>1$) case. All the results of this paper can therefore be read out of the more general results expounded in the first two sections of~\cite{areg}, but to do so would require digesting the higher-order theory as well, which is a little more involved. In particular, while in~\cite{areg} the authors rely on the inverse theorem for the $U^{s+1}$ norm, we need only the inverse theorem for the $U^2$ norm, which is elementary both to state and to prove.

The results of this paper are also contained in~\cite{higher-fourier}. Compared to that treatment, we use slightly different language in a few places, and we absorb the Ratner-type theory into the statement of the regularity lemma.

The arithmetic regularity lemma states, roughly speaking, that an arbitrary function $f:[N]\to[0,1]$ is the sum of a structured part $f_{\str}$, a small part $f_{\sml}$, and a Gowers-uniform part $f_{\unf}$. Moreover we can buy higher-order uniformity of $f_{\unf}$ at the cost of more involved structure of $f_{\str}$, but here we will only be able to afford $U^2$ uniformity.

We start with the inverse theorem for the $U^2$ norm. We define the $U^2(\Z/M\Z)$ norm of a function $f:\Z/M\Z\to\C$ as
\[
  \|f\|_{U^2(\Z/M\Z)} = \(\E_{a,h_1,h_2\in \Z/M\Z} f(a)\overline{f(a+h_1)} \overline{f(a+h_2)} f(a+h_1+h_2)\)^\frac{1}{4},
\]
and then the $U^2([N])$ norm of a function $f:[N]\to\C$ as
\[
  \|f\|_{U^2([N])} = \frac{\|f\|_{U^2(\Z/M\Z)}}{\| 1_{[N]}\|_{U^2(\Z/M\Z)}},
\]
where $M\geq 2N$ and we define $f(x)=0$ if $x\notin[N]$: one easily checks that this definition is independent of the choice of $M$. We will often abbreviate $U^2([N])$ to $U^2$ when no confusion can arise.

Given $f:\Z/M\Z\to\C$ we define the Fourier transform $\hat{f}$ of $f$ by
\[
  \hat{f}(r) = \E_{x\in\Z/M\Z} f(x) e_M(-rx)
\]
for $r\in\Z/M\Z$, where $e_M(x) = e(x/M)$. The Fourier inversion formula then states
\[
  f(x) = \sum_{r\in\Z/M\Z} \hat{f}(r) e_M(rx).
\]
Using these formulae one easily proves
\[
  \|f\|_{U^2(\Z/M\Z)} = \(\sum_{r\in \Z/M\Z} |\hat{f}(r)|^4\)^\frac{1}{4}.
\]

\begin{lemma}[Inverse theorem for the $U^2$ norm]
If $f:[N]\to[-1,1]$ is a function such that $\|f\|_{U^2} \geq \delta$, then there exists $\theta\in\T$ such that
\[
  \left| \E_{n\in[N]} f(n) e(-\theta n) \right| \gg_\delta 1.
\]
\end{lemma}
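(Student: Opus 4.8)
The plan is to pass to the cyclic group $\Z/M\Z$ for some admissible $M$ (say $M=2N$), use the identity $\|f\|_{U^2(\Z/M\Z)}^4 = \sum_r |\hat f(r)|^4$ recorded above, and extract a large Fourier coefficient of $f$ by pigeonholing against Parseval's identity; the required $\theta$ will then be $r/M$ for the largest coefficient. No genuine ``inverse-theoretic'' input is involved: the content is just that for the $U^2$ norm, the Fourier transform of $f$ lives in $\ell^4$ and $\ell^4$-largeness forces $\ell^\infty$-largeness given the $\ell^2$ bound from Parseval.

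First I would unwind the normalisation. The hypothesis $\|f\|_{U^2([N])}\ge\delta$ reads $\|f\|_{U^2(\Z/M\Z)}^4 \ge \delta^4 \|1_{[N]}\|_{U^2(\Z/M\Z)}^4$, so one needs a lower bound on $\|1_{[N]}\|_{U^2(\Z/M\Z)}$. Since $M\ge 2N$ there is no wraparound, and $\|1_{[N]}\|_{U^2(\Z/M\Z)}^4$ is $M^{-3}$ times the number of parallelograms $(a,\,a+h_1,\,a+h_2,\,a+h_1+h_2)$ with all four vertices in $[N]$; restricting to $1\le a\le N/2$ and $0\le h_1,h_2\le N/4$ already produces $\gg N^3$ of them, so $\|1_{[N]}\|_{U^2(\Z/M\Z)}^4 \gg (N/M)^3$. (This quantity can be evaluated exactly, but only the lower bound is needed.)

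Next, writing $L = \max_{r\in\Z/M\Z}|\hat f(r)|$, I would estimate
\[
  \sum_{r\in\Z/M\Z}|\hat f(r)|^4 \;\le\; L^2\sum_{r\in\Z/M\Z}|\hat f(r)|^2 \;=\; L^2\,\E_{x\in\Z/M\Z}|f(x)|^2 \;\le\; L^2\cdot\frac{N}{M},
\]
using Parseval's identity together with the facts that $f$ is supported on $[N]$ and $|f|\le 1$. Comparing with the previous paragraph gives $\delta^4 (N/M)^3 \ll L^2 (N/M)$, hence $L \gg \delta^2 (N/M)$.

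Finally I would pick $r$ with $|\hat f(r)|=L$ and set $\theta = r/M\in\T$. Since $\hat f(r) = M^{-1}\sum_{n\in[N]} f(n)e_M(-rn) = (N/M)\,\E_{n\in[N]} f(n) e(-\theta n)$, this yields $\bigl|\E_{n\in[N]} f(n)e(-\theta n)\bigr| = (M/N)\,L \gg \delta^2$, which is the claim (indeed with the explicit bound $\gg\delta^2$). The only step requiring any care is the parallelogram lower bound for $\|1_{[N]}\|_{U^2(\Z/M\Z)}$; everything else is the Fourier formula for the $U^2$ norm combined with a one-line application of Parseval.
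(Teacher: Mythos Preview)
Your proof is correct and follows the same route as the paper's: embed into $\Z/M\Z$ with $M=2N$, use the Fourier identity $\|f\|_{U^2}^4=\sum_r|\hat f(r)|^4$, and bound $\ell^4$ by $\ell^\infty\cdot\ell^2$ via Parseval to extract a large coefficient $\hat f(r)$, then set $\theta=r/M$. The only cosmetic difference is that you track the $N/M$ factors explicitly (in the lower bound for $\|1_{[N]}\|_{U^2(\Z/M\Z)}$, in the Parseval step, and in converting $\hat f(r)$ back to $\E_{n\in[N]}$), whereas the paper simply absorbs them as constants since $M=2N$.
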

\begin{proof}
The condition $\|f\|_{U^2([N])}\geq\delta$ implies that $\|f\|_{U^2(\Z/M\Z)} \gg\delta$, where $M=2N$ and as usual we extend $f$ by zero to the rest of $\Z/M\Z$. We therefore have
\[
  \sum_{r\in\Z/M\Z} |\hat{f}(r)|^4 \gg \delta^4.
\]
From Parseval's theorem and the hypothesis $|f|\leq 1$ it then follows that
\[
  \delta^4 \ll \sup|\hat{f}|^2 \(\sum_{r\in\Z/M\Z}|\hat{f}(r)|^2\) = \sup|\hat{f}|^2 \(\E_{x\in\Z/M\Z} |f(x)|^2\) \leq \sup|\hat{f}|^2.
\]
Thus $|\hat{f}(r)|\gg\delta^2$ for at least one $r\in\Z/M\Z$, so we may take $\theta = r/M$.
\end{proof}

We need a slightly modified form of the above lemma in order to apply an energy increment argument, but first we need some language. Let us say that $f:[N]\to\R$ has \emph{$1$-complexity} at most $M$ if $f(n)=F(\theta n)$ for some $F:\T^d\to\R$ and $\theta\in\T^d$ such that $d,\|F\|_{\Lip}\leq M$. Here we take the Euclidean metric
\[
  d(x,y) = \min_{z\in\Z^d} \|x-y-z\|_2
\]
on $\T^d$, and we define the \emph{Lipschitz norm} $\|F\|_{\Lip}$ of $F:\T^d\to\R$ by
\[
  \|F\|_{\Lip} = \sup_{x}|F(x)| + \sup_{x\neq y}\frac{|F(x)-|F(y)|}{d(x,y)}.
\]
The Fourier inversion formula shows that every $f:[N]\to\C$ has finite $1$-complexity, but functions of bounded $1$-complexity are special.

Our results from now on will be quantified by an arbitrary \emph{growth function}, by which we mean simply an increasing function $\cF:\R^+\to\R^+$. By $\cF\ll_X 1$ we will mean that $\cF$ is bounded by a function $\R^+\to\R^+$ depending only on the parameter $X$; in other words $\cF\ll_X 1$ means $\cF(M)\ll_{X,M} 1$.

We say $f$ is \emph{$1$-measurable} with growth $\cF$ if for every $M>0$ there is some function $f_{\str}:[N]\to\R$ of $1$-complexity at most $\cF(M)$ such that
\[
  \|f-f_{\str}\|_2\leq \frac{1}{M},
\]
where the $L^2([N])$ norm of a function $f:[N]\to\C$ is defined by
\[
  \|f\|_2 = \(\E_{x\in[N]} |f(x)|^2\)^\frac{1}{2}.
\]
A set $E\subset[N]$ is called $1$-measurable with growth $\cF$ if $1_E$ is so. Note that if $f$ and $g$ are $1$-measurable with growth $\cF$ then $f+g$ and $fg$ are $1$-measurable with growth $\ll_\cF 1$, so if $E$ and $F$ are $1$-measurable with growth $\cF$ then $E\cup F$, $E\cap F$, $E\backslash F$, and so on, are all $1$-measurable with growth $\ll_\cF 1$.

\begin{lemma}[$U^2$ inverse theorem, alternative formulation]
If $f:[N]\to[-1,1]$ is a function such that $\|f\|_{U^2} \geq \delta$, then there is a $1$-measurable set $E\subset[N]$ with growth $\ll_\delta 1$ such that
\[
  \left|\E_{n\in[N]} f(n) 1_E(n)\right|\gg_\delta 1.
\]
\end{lemma}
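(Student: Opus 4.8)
The plan is to bootstrap from the previous lemma, which supplies a single frequency $\theta\in\T$ with $\bigl|\E_{n\in[N]}f(n)e(-\theta n)\bigr|\gg_\delta 1$. Expanding $e(-\theta n)=\cos(2\pi\theta n)-i\sin(2\pi\theta n)$ and separating real and imaginary parts, I would first extract $\bigl|\E_{n\in[N]}f(n)g(\theta n)\bigr|\gg_\delta 1$ for one of the four functions $g(x)=\pm\cos(2\pi x),\ \pm\sin(2\pi x)$, each of which is a Lipschitz map $\T\to[-1,1]$ with $\|g\|_{\Lip}\ll 1$. Replacing $g$ by whichever of its positive and negative parts carries the correlation, I may assume $0\leq g\leq 1$; concretely $g(x)=\max\bigl(\cos(2\pi(x+\beta)),0\bigr)$ for some fixed $\beta\in\{0,\tfrac14,\tfrac12,\tfrac34\}$, and still $\bigl|\E_{n\in[N]}f(n)g(\theta n)\bigr|\gg_\delta 1$.

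Next I would break $g$ into indicators via the layer-cake identity $g(x)=\int_0^1 1_{\{g>t\}}(x)\,\d t$. For $t\in[0,1)$ the superlevel set $\{x\in\T:g(x)>t\}$ is an arc $\bigl\{x:\|x+\beta\|<\tfrac1{2\pi}\arccos t\bigr\}$, so the superlevel set of the function $n\mapsto g(\theta n)$ on $[N]$ is the translated Bohr set $B(s)=\{n\in[N]:\|\theta n+\beta\|<s\}$ with $s=\tfrac1{2\pi}\arccos t$. Integrating yields $\bigl|\int_0^1\E_{n\in[N]}f(n)1_{B(s(t))}(n)\,\d t\bigr|\gg_\delta 1$, and since each inner average has modulus at most $1$, a routine averaging argument---changing variables from $t$ to $s$, a map with derivative bounded away from $0$---produces a set of radii $S\subset(0,\tfrac14]$, with $|S|\gg_\delta 1$ and bounded away from $0$, such that $\bigl|\E_{n\in[N]}f(n)1_{B(s)}(n)\bigr|\gg_\delta 1$ for every $s\in S$.

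It remains to select $s^*\in S$ for which $B(s^*)$ is itself $1$-measurable with growth $\ll_\delta 1$, and I expect this to be the only genuine obstacle: a Bohr set with a carelessly chosen radius need not have uniformly bounded $1$-complexity, since the points $\theta n+\beta$ $(n\in[N])$ may pile up near the sphere $\|\cdot\|=s^*$ (as happens when $\theta$ is close to a rational with small denominator). To circumvent this I would bring in an anti-concentration estimate for the distribution $\nu$ of $\|\theta n+\beta\|$ over $n\in[N]$: by Fubini $\int_0^{1/2}\nu\bigl([s-\eps,s+\eps]\bigr)\,\d s\leq 2\eps$, so by Markov's inequality the set of $s$ with $\nu([s-\eps,s+\eps])>\sqrt\eps$ has measure at most $2\sqrt\eps$; summing over a geometric sequence $\eps=\eps_j\downarrow 0$ whose square roots have sum less than $|S|$ (possible since $|S|\gg_\delta 1$), I obtain $s^*\in S$ with $\nu\bigl([s^*-\eps_j,s^*+\eps_j]\bigr)\leq\sqrt{\eps_j}$ for every $j$.

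Finally, for this $s^*$ I would verify $1$-measurability by hand. Given $M$, choose $j$ with $\eps_j\leq M^{-4}$ and put $h(n)=\phi(\theta n)$, where $\phi:\T\to[0,1]$ is the piecewise-linear bump equal to $1$ on $\{\|x+\beta\|\leq s^*-\eps_j\}$, equal to $0$ on $\{\|x+\beta\|\geq s^*\}$, and linear in between; then $h$ has $1$-complexity $\ll 1/\eps_j\ll M^4$, while $\|1_{B(s^*)}-h\|_2^2\leq\nu\bigl([s^*-\eps_j,s^*+\eps_j]\bigr)\leq M^{-4}$, so $\|1_{B(s^*)}-h\|_2\leq 1/M$. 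For the bounded range of small $M$ (below a $\delta$-dependent threshold) one simply reuses the approximant attached to the threshold value. Setting $E=B(s^*)$ then finishes the proof, the only nonroutine ingredient being the non-resonant choice of radius; the rest is soft manipulation of Lipschitz functions, the layer-cake decomposition, and pigeonholing.
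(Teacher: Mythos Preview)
Your argument is correct and follows the same overall architecture as the paper's proof: apply the $U^2$ inverse theorem, pass to the real/imaginary and then positive/negative part of the correlating phase, run the layer-cake decomposition to obtain a one-parameter family of level sets (your Bohr sets $B(s)$, the paper's $E_t$), pigeonhole to find many good parameters, and then select one at which the boundary is not too crowded so that a Lipschitz bump gives the required $1$-measurable approximation.

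The only substantive difference is in how the ``non-resonant'' parameter is chosen. The paper invokes the Hardy--Littlewood maximal inequality: with $M(t)=\sup_{r>0}\frac{1}{2r}\nu([t-r,t+r])$ one has $|\{M\geq\lambda\}|\ll 1/\lambda$, so some $t\in\Omega$ satisfies $M(t)\ll_\delta 1$, giving the uniform bound $\nu([t-r,t+r])\ll_\delta r$ at \emph{every} scale $r$. You instead use the elementary Fubini estimate $\int_0^{1/2}\nu([s-\eps,s+\eps])\,\d s\leq 2\eps$ together with a Borel--Cantelli/union-bound over a geometric sequence of scales $\eps_j$, obtaining only $\nu([s^*-\eps_j,s^*+\eps_j])\leq\sqrt{\eps_j}$ along that sequence. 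Your route is more self-contained (no maximal inequality needed) at the cost of a slightly worse growth function---roughly $\cF(M)\ll_\delta M^4$ versus $\ll_\delta M^2$---which is immaterial here since only $\cF\ll_\delta 1$ is required. One small point: your claim that $S$ can be taken bounded away from $0$ is not automatic from the change of variables, but since $|S|\gg_\delta 1$ and $S\subset(0,\tfrac14]$ one may simply replace $S$ by $S\setminus[0,|S|/2]$.
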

\begin{proof}
By the previous lemma there is some $\theta\in\T$ such that $\phi(n) = e(-\theta n)$ satisfies
\[
  \left| \E_{n\in[N]} f(n) \phi(n) \right| \gg_\delta 1.
\]
Now by replacing $\phi$ with its the real or imaginary part, and then with its positive or negative part, we may assume that $\phi$ is real and nonnegative (e.g., if we take the real and then positive parts, then $\phi(n) = (\Re\,e(-\theta n))^+$).

For $0\leq t\leq 1$, let
\[
  E_t = \{n\in[N] : \phi(n) \geq t\}.
\]
Noting that
\[
  \phi(n) = \int_0^1 1_{E_t}(n)\, dt,
\]
it follows that
\[
  \int_0^1 \left|\E_{n\in[N]} f(n) 1_{E_t}(n)\right|\,dt\gg_\delta 1,
\]
and so
\[
  \left|\E_{n\in[N]} f(n) 1_{E_t}(n)\right|\gg_\delta 1
\]
for all $t$ in a set $\Omega\subset[0,1]$ of measure $|\Omega|\gg_\delta 1$.

Among these sets $E_t$ with $t\in\Omega$ there must be some $E_t$ which is approximately invariant under small changes in $t$. Indeed, if 
\[
  M(t) = \sup_{r>0}\frac{1}{2r}\frac{1}{N}|\{n\in[N] : |\phi(n)-t|\leq r\}|
\]
then the Hardy--Littlewood maximal inequality (see any standard reference, such as~\cite{rudin-real-complex}) states
\[
  |\{t\in[0,1] : M(t)\geq \lambda\}| \ll \frac{1}{\lambda}.
\]
Since $|\Omega|\gg_\delta 1$ there is some $t\in\Omega$ such that $M(t)\ll_\delta 1$.

For any such $t$, $E_t$ is $1$-measurable with growth $\ll_\delta 1$. Indeed, note for any $r>0$ that
\[
  |\{n\in[N]: |\phi(n) - t|\leq r\}| \ll_\delta rN.
\]
Choosing $\eta:\R\to\R^+$ of Lipschitz norm $\|\eta\|_{\Lip}\ll 1/r$ such that $\eta(x)=0$ if $x<t - r$ and $\eta(x)=1$ if $x>t+r$, it follows that $\|1_{E_t} - \eta\circ\phi\|_2\ll_\delta \sqrt{r}$. Since $\phi$ is a function of $\theta n$ of Lipschitz norm $\ll 1$, this implies that $1_{E_t}$ is $1$-measurable with growth $\ll_\delta 1$.
\end{proof}

A \emph{factor} $\B$ of $[N]$ is a subalgebra of $2^{[N]}$, or equivalently a partition of $[N]$ into cells. We say a factor $\B^\prime$ \emph{refines} another $\B$ if every cell of $\B$ is a union of cells of $\B^\prime$. We call $\B$ a \emph{$1$-factor} with complexity at most $M$ and growth $\cF$ if $\B$ has $M$ cells, each of which is $1$-measurable with growth $\cF$. Note in this case that every $\B$-measurable (i.e., constant on each cell of $\B$) function $f:[N]\to[-1,1]$ is $1$-measurable with growth $\ll_{M,\cF} 1$.

For $x\in[N]$ we define $\B(x)$ to be the unique cell containing $x$, and we define the \emph{conditional expectation} $\E(f|\B)$ of a function $f:[N]\to\C$ by
\[
  \E(f|\B)(x) = \frac{1}{|\B(x)|} \sum_{y\in\B(x)} f(y).
\]
Equivalently, the function $\E(f|\B)$ is the orthogonal projection of $f$ onto the subspace of $\B$-measurable functions. Finally, with respect to a fixed function $f:[N]\to\C$, the \emph{energy} of $\B$ is $\cE(\B) = \|\E(f|\B)\|_2^2$.

\begin{corollary}[Lack of uniformity allows energy increment]
Suppose $\B$ is a $1$-factor of complexity $\leq M$ and growth $\cF$ and $f:[N]\to[-1,1]$ is a function such that $\|f-\E(f|\B)\|_{U^2([N])} \geq \delta$. Then there exists a refinement $\B^\prime$ of $\B$ of complexity $\leq 2M$ and growth $\ll_{M,\delta,\cF} 1$ such that
\[
  \cE(\B^\prime) - \cE(\B) \gg_\delta 1.
\]
\end{corollary}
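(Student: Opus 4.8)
The plan is to run a standard energy increment argument on top of the alternative formulation of the $U^2$ inverse theorem.

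First I would set $g = f - \E(f|\B)$. Since both $f$ and $\E(f|\B)$ take values in $[-1,1]$, the function $g/2$ takes values in $[-1,1]$, and the hypothesis gives $\|g/2\|_{U^2}\geq\delta/2$. Applying the alternative $U^2$ inverse theorem to $g/2$, I obtain a $1$-measurable set $E\subset[N]$ with growth $\ll_\delta 1$ such that
\[
  \left|\E_{n\in[N]} g(n) 1_E(n)\right| \gg_\delta 1.
\]
Next I would take $\B^\prime$ to be the coarsest common refinement of $\B$ and the two-cell factor $\{E,[N]\setminus E\}$. Its cells are the nonempty sets $B\cap E$ and $B\setminus E$ as $B$ ranges over the cells of $\B$, so $\B^\prime$ has at most $2M$ cells; each such cell is an intersection or difference of a cell of $\B$ (growth $\cF$) and $E$ (growth $\ll_\delta 1$), hence is $1$-measurable with growth $\ll_{M,\delta,\cF} 1$ by the closure properties of $1$-measurability noted above. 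Thus $\B^\prime$ is a $1$-factor of complexity $\leq 2M$ and growth $\ll_{M,\delta,\cF} 1$.

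It remains to verify the energy increment. Write $\langle u,v\rangle = \E_{n\in[N]} u(n)v(n)$ for the $L^2([N])$ inner product. Since $\B^\prime$ refines $\B$, the tower property gives $\E(\E(f|\B^\prime)|\B) = \E(f|\B)$, so $\E(f|\B)$ is the orthogonal projection of $\E(f|\B^\prime)$ onto the $\B$-measurable functions, and by Pythagoras
\[
  \cE(\B^\prime) - \cE(\B) = \|\E(f|\B^\prime)\|_2^2 - \|\E(f|\B)\|_2^2 = \|\E(f|\B^\prime) - \E(f|\B)\|_2^2.
\]
On the other hand, $1_E$ is $\B^\prime$-measurable and $\E(g|\B^\prime) = \E(f|\B^\prime) - \E(f|\B)$ (using that $\E(f|\B)$ is already $\B^\prime$-measurable), so by self-adjointness of conditional expectation and Cauchy--Schwarz,
\[
  \left|\E_{n\in[N]} g(n)1_E(n)\right| = \left|\langle \E(g|\B^\prime), 1_E\rangle\right| \leq \|\E(f|\B^\prime)-\E(f|\B)\|_2\, \|1_E\|_2 \leq \|\E(f|\B^\prime)-\E(f|\B)\|_2.
\]
Combining the last two displays with the lower bound from the inverse theorem gives $\cE(\B^\prime)-\cE(\B)\gg_\delta 1$, as required.

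I do not anticipate a serious obstacle here: the argument is routine once the alternative $U^2$ inverse theorem is in hand. The only points needing care are the rescaling of $g$ so that the inverse theorem applies and checking that the cells of $\B^\prime$ inherit a good growth bound, both of which are bookkeeping rather than any new idea.
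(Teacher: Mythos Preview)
Your proof is correct and follows the same approach as the paper: apply the alternative $U^2$ inverse theorem to $f-\E(f|\B)$ to find a $1$-measurable set $E$, refine $\B$ by $E$, and conclude via Cauchy--Schwarz and Pythagoras. Your explicit rescaling by $1/2$ to ensure the inverse theorem's hypothesis $g/2:[N]\to[-1,1]$ is satisfied is a point of care the paper glosses over, but otherwise the arguments coincide.
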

\begin{proof}
By the previous corollary there is a $1$-measurable set $E\subset[N]$ with growth $\ll_\delta 1$ such that
\[
  |\langle f - \E(f|\B), 1_E\rangle | \gg_\delta 1.
\]
Let $\B^\prime$ be the factor generated by $\B$ and $E$. Then $\B^\prime$ is a $1$-factor of complexity $\leq 2M$ and growth $\ll_{M,\delta,\cF} 1$, and since $1_E$ is $\B^\prime$-measurable we have
\[
  |\langle \E(f|\B^\prime) - \E(f|\B), 1_E\rangle| \gg_\delta 1.
\]
Now Cauchy--Schwarz and the Pythagorean theorem imply that
\[
  \cE(\B^\prime) - \cE(\B) = \|\E(f|\B^\prime) - \E(f|\B)\|_2^2 \gg_\delta 1.\qedhere
\]
\end{proof}

We can now deduce a weak form of the regularity lemma, occasionally referred to as the Koopman--von Neumann theorem.

\begin{corollary}[Weak regularity]
Let $\B$ be a $1$-factor of complexity $M$ and growth $\cF$, and let $f:[N]\to[-1,1]$ be a function. Then there exists a refinement $\B^\prime$ of $\B$ of complexity $\ll_{\delta,M} 1$ and growth $\ll_{\delta,M,\cF} 1$ such that
\[
  \| f - \E(f|\B^\prime)\|_{U^2([N])} \leq \delta .
\]
\end{corollary}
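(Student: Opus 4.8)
The plan is to run a standard energy increment argument, using the previous corollary (``Lack of uniformity allows energy increment'') as the engine. Starting from the given factor $\B=\B_0$, we repeatedly ask whether $\|f-\E(f|\B_i)\|_{U^2([N])}\leq\delta$. If it is, we stop and set $\B^\prime=\B_i$. If not, the previous corollary furnishes a refinement $\B_{i+1}$ of $\B_i$ with complexity at most $2^{i+1}M$ and energy $\cE(\B_{i+1})\geq\cE(\B_i)+c(\delta)$ for some fixed $c(\delta)>0$ depending only on $\delta$.

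The termination argument is the crux: since $f$ takes values in $[-1,1]$, the energy $\cE(\B_i)=\|\E(f|\B_i)\|_2^2$ is bounded above by $\|f\|_2^2\leq 1$ (and below by $0$), so the process must halt after at most $1/c(\delta)\ll_\delta 1$ steps. Hence the final factor $\B^\prime$ has complexity at most $2^{O_\delta(1)}M\ll_{\delta,M}1$, as required.

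The one point that needs a little care is the bookkeeping of the growth function, since the growth bound produced at each application of the corollary depends on the complexity and growth of the factor fed into it. At step $i$ the corollary is applied to a $1$-factor of complexity $\leq 2^iM$ and some growth $\cF_i$, producing growth $\cF_{i+1}\ll_{2^iM,\delta,\cF_i}1$. Iterating this at most $O_\delta(1)$ times, and using that $2^iM\ll_{\delta,M}1$ throughout, we obtain a final growth bound of the shape $\cF^\prime\ll_{\delta,M,\cF}1$: the dependence on $\cF$ enters only through the first step (where $\cF_0=\cF$), and every subsequent dependence is on quantities already controlled by $\delta$ and $M$. Composing the finitely many resulting functions yields a single growth function depending only on $\delta$, $M$, and $\cF$.

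I do not expect any genuine obstacle here; the inverse-theoretic content has already been packaged into the corollary, and this last step is the routine ``iterate until the energy stops increasing'' bootstrap. The only thing to be mildly careful about is to state the iteration cleanly enough that the reader sees the complexity bound $\ll_{\delta,M}1$ and the growth bound $\ll_{\delta,M,\cF}1$ emerge uniformly from the at-most-$O_\delta(1)$-fold composition.
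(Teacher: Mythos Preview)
Your proposal is correct and follows exactly the same approach as the paper: iterate the energy-increment corollary until the $U^2$ norm drops below $\delta$, and use $0\leq\cE(\B)\leq 1$ to bound the number of iterations by $\ll_\delta 1$. The paper's proof is just a one-sentence version of what you wrote, omitting the (routine) complexity and growth bookkeeping you spelled out.
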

\begin{proof}
Repeatedly apply the previous corollary to refine the $1$-factor $\B$. Since $0\leq \cE(\B)\leq 1$, this process must end after $\ll_{\delta} 1$ steps.
\end{proof}

Finally, by iterating \emph{this} result, we deduce full regularity.

\begin{theorem}[The $U^2$ regularity lemma]\label{thm:nonirratareg}
Let $f : [N] \to [0,1]$ be a function, $\cF$ a growth function, and $\eps>0$. Then there is a quantity $M\ll_{\eps,\cF} 1$ and a decomposition
\[
  f = f_{\str} + f_{\sml} + f_{\unf}
\]
of $f$ into functions $f_{\str}, f_{\sml}, f_{\unf} : [N] \to [-1,1]$ such that
\begin{enumerate}
  \item $f_{\str}$ has $1$-complexity at most $M$,
  \item $f_{\sml}$ has $L^2([N])$ norm at most $\eps$,
  \item $f_{\unf}$ has $U^2([N])$ norm at most $1/\cF(M)$,
  \item $f_{\str}$ and $f_{\str} + f_{\sml}$ take values in $[0,1]$.
\end{enumerate}
\end{theorem}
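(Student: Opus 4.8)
The plan is to iterate the weak regularity corollary, choosing the uniformity parameter at each step so as to anticipate the eventual complexity of the structured part, and then to stop the iteration by a pigeonhole on energy. The one point to keep in mind throughout is that a conditional expectation $\E(f|\B)$ is only \emph{$1$-measurable} with controlled growth, not literally of bounded $1$-complexity; hence $f_{\str}$ will be a genuine bounded-complexity approximation to such a conditional expectation, with the approximation error absorbed into $f_{\sml}$.

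First I would build a nested sequence of $1$-factors $\B_0\subseteq\B_1\subseteq\cdots$, with $\B_0$ the trivial factor. Given $\B_i$, of complexity $M_i$ and growth $\cF_i$, the function $\E(f|\B_i)$ takes values in $[0,1]$ and is $1$-measurable with growth $\ll_{M_i,\cF_i}1$; applying the definition of $1$-measurability at scale $2/\eps$ and composing with the truncation $x\mapsto\max(0,\min(1,x))$ (which changes the $1$-complexity by at most a bounded amount and, since $\E(f|\B_i)$ is $[0,1]$-valued, only decreases the $L^2$ distance to $\E(f|\B_i)$) produces a $[0,1]$-valued function $g_i$ of $1$-complexity at most $D_i$, for some $D_i\ll_{M_i,\cF_i,\eps}1$, with $\|g_i-\E(f|\B_i)\|_2\leq\eps/2$. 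I then apply the weak regularity corollary to $\B_i$ with parameter $\delta_i:=1/\cF(D_i)$, obtaining a refinement $\B_{i+1}\supseteq\B_i$ of complexity $M_{i+1}$ and growth $\cF_{i+1}$, both $\ll_{M_i,\cF_i,\eps,\cF}1$, such that $\|f-\E(f|\B_{i+1})\|_{U^2}\leq\delta_i=1/\cF(D_i)$. The key feature is that $\delta_i$ is chosen in terms of a bound on the complexity of $g_i$, not merely of the number of cells of $\B_i$.

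Since $M_0$ and $\cF_0$ are absolute and each step's parameters are bounded in terms of the previous step's together with $\eps$ and $\cF$, induction over $i$ gives $M_i,\cF_i,D_i,1/\delta_i\ll_{\eps,\cF}1$ for all $i\leq K$, where $K:=\lceil 4/\eps^2\rceil\ll_\eps 1$. The energies satisfy $0\leq\cE(\B_0)\leq\cdots\leq\cE(\B_K)\leq1$, so by pigeonhole there is some $i<K$ with $\cE(\B_{i+1})-\cE(\B_i)\leq\eps^2/4$. For this $i$ I set $M:=D_i$, $f_{\str}:=g_i$, $f_{\sml}:=\E(f|\B_{i+1})-g_i$, and $f_{\unf}:=f-\E(f|\B_{i+1})$. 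Then (1) is immediate, with $M\ll_{\eps,\cF}1$; (3) holds because $\|f_{\unf}\|_{U^2}\leq 1/\cF(D_i)=1/\cF(M)$, the same $M$ serving both (1) and (3) precisely because $D_i$ bounds the $1$-complexity of $g_i$; (4) holds since $f_{\str}=g_i\in[0,1]$ and $f_{\str}+f_{\sml}=\E(f|\B_{i+1})\in[0,1]$; and (2) follows because, by the Pythagorean theorem ($\B_i\subseteq\B_{i+1}$) and the triangle inequality, $\|f_{\sml}\|_2\leq\|\E(f|\B_{i+1})-\E(f|\B_i)\|_2+\|\E(f|\B_i)-g_i\|_2\leq\sqrt{\cE(\B_{i+1})-\cE(\B_i)}+\eps/2\leq\eps$. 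All three functions are plainly $[-1,1]$-valued.

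The step I expect to demand the most care is the choice of $\delta_i$ and the attendant bookkeeping: one must feed into $\cF$ not the number of cells of $\B_i$ but an upper bound $D_i$ for the $1$-complexity of the truncated approximation $g_i$, and then verify that after the bounded number of iterations every quantity --- including the growth functions $\cF_i$ of the intermediate factors --- stays controlled in terms of $\eps$ and $\cF$ alone. Enforcing the $[0,1]$-valued conditions in (4) via truncation, and splitting the $L^2$ budget in (2) between the energy increment and the approximation error, are then routine.
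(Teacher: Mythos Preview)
Your proof is correct and follows essentially the same approach as the paper: iterate weak regularity, at each step approximating $\E(f|\B_i)$ by a truncated bounded-complexity function, choose the uniformity parameter at step $i$ in terms of that complexity bound, and stop by an energy pigeonhole, with $f_{\str}$ the approximant, $f_{\sml}$ the discrepancy from $\E(f|\B_{i+1})$, and $f_{\unf}$ the remainder. The only cosmetic difference is notation (your $D_i,g_i$ are the paper's $M_{i+1},f_{\str}^{(i)}$), and your bookkeeping is, if anything, slightly more explicit.
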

\begin{proof}
Starting with $M_0=1$ and $\B_0 = \{\emptyset,[N]\}$, suppose inductively that $\B_i$ is a $1$-factor of complexity and growth $\ll_{i,M_i,\cF} 1$. Then there is a function $f_{\str}^{(i)}:[N]\to\R$ of $1$-complexity $M_{i+1}\ll_{\eps, i,M_i,\cF} 1$ such that $M_{i+1}\geq M_i$ and
\[
  \|\E(f|\B_i)-f_{\str}^{(i)}\|_2\leq \eps/2.
\]
Moreover, by truncating $f_{\str}^{(i)}$ above and below (which doesn't increase $1$-complexity) we may assume that $f_{\str}^{(i)}:[N]\to[0,1]$. By the previous corollary there is a refinement $\B_{i+1}$ of $\B_i$ of complexity and growth $\ll_{i,M_{i+1},\cF} 1$ such that
\[
  \|f - \E(f|\B_{i+1})\|_{U^2([N])} \leq 1/{\cF(M_{i+1})}.
\]

Note in the end that $M_i\ll_{\eps,i,\cF} 1$, and since $(\cE(\B_i))$ is an increasing sequence in $[0,1]$ there is some $i\ll_\eps 1$ such that
\[
  \cE(\B_{i+1}) - \cE(\B_i) = \|\E(f|\B_{i+1}) - \E(f|\B_i)\|_2^2 \leq \eps^2/4.
\]
Let $M = M_{i+1}$ and let
\begin{align*}
f_{\str} &= f_{\str}^{(i)}, \\
f_{\sml} &= \E(f|\B_{i+1}) - f_{\str}^{(i)}, \\
f_{\unf} &= f-\E(f|\B_{i+1}). \qedhere
\end{align*}
\end{proof}

It is often convenient to make the structure of $f_{\str}$ a little more explicit. Specifically, we know that $f_{\str}(n) = F(\theta n)$ for some $F:\T^d \to [0,1]$ and $\theta\in\T^d$ such that $d, \|F\|_{\Lip} \leq M$, but what exactly this entails about the behaviour of $f_{\str}$ depends critically on the Diophantine properties of $\theta$. For counting purposes we would like $\theta\in\T^d$ to be \emph{$(A,N)$-irrational} for some large $A$, meaning that if $q\in\Z^d\setminus \{0\}$ and $\|q\|_1 \leq A$ (where if $q = (q_1,\dots,q_d)$ then $\|q\|_1 = |q_1| + \cdots + |q_d|$) then $\|q\cdot\theta\|_\T \geq A/N$: this would guarantee that $\theta n$ rapidly equidistributes over $\T^d$. Of course there are other possible behaviours of $\theta$: it may be that $\theta$ itself is small, in which case $q\cdot\theta$ moves slowly away from $0$, or it may be that $\theta$ is rational, in which case $q\cdot\theta$ frequently returns to $0$, or there may be a combination of these behaviours. Nevertheless, it turns out that once these two pollutants are boiled off, the remnant is highly irrational in the above sense.

We say a subtorus $T$ of $\T^d$ of dimension $d^\prime$ has \emph{complexity} at most $M$ if there is some $L\in\SL_d(\Z)$, all of whose coefficients have size at most $M$, such that $L(T) = \T^{d^\prime}\times\{0\}^{d-d^\prime}$. In this case we implicitly identify $T$ with $\T^{d^\prime}$ using $L$. For instance, we say $\theta\in\T^d$ is \emph{$(A,N)$-irrational in $T$} if $L(\theta)$ is $(A,N)$-irrational in $\T^{d^\prime}$.

\begin{theorem}
Given $\theta\in \T^d$, a positive integer $N$, and a growth function $\cF$, there is a quantity $M\ll_{d,\cF} 1$ and a decomposition
\[
  \theta = \theta_{\smth} + \theta_{\rat} + \theta_{\irrat}
\]
such that
\begin{enumerate}
  \item $\theta_{\smth}$ is $(M,N)$-smooth, meaning $d(\theta_{\smth},0) \leq \frac{M}{N}$,
  \item $\theta_{\rat}$ is $M$-rational, meaning $q\theta_{\rat}=0$ for some $q\leq M$, and
  \item $\theta_{\irrat}$ is $(\cF(M),N)$-irrational in a subtorus of complexity $\leq M$.
\end{enumerate}
\end{theorem}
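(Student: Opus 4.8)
The plan is to prove this by an iteration that repeatedly peels a rational piece and a smooth piece off $\theta$ while dropping the dimension of the ambient torus by one, stopping as soon as what remains is irrational in the required sense. Since the dimension can fall at most $d$ times, the iteration runs for at most $d$ steps.

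For the basic step, suppose we have arrived at a subtorus $T\subset\T^d$ of dimension $d'$ and complexity at most $m$, say $L(T)=\T^{d'}\times\{0\}^{d-d'}$ with $L\in\SL_d(\Z)$ of size at most $m$, together with a remaining element $\theta'\in T$ whose image $\psi\in\T^{d'}$ (the first $d'$ coordinates of $L\theta'$) we wish to analyse. If $\psi$ is $(\cF(m),N)$-irrational in $\T^{d'}$ we stop. Otherwise there is $q\in\Z^{d'}\setminus\{0\}$ with $\|q\|_1\leq \cF(m)$ and $\|q\cdot\psi\|_\T < \cF(m)/N$; writing $q=g q_0$ with $q_0$ primitive and $g\leq\cF(m)$, we get $\|q_0\cdot\psi - a/g\|_\T < \cF(m)/N$ for some integer $a$. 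A standard fact (provable by the Euclidean algorithm or Smith normal form) lets us complete the primitive vector $q_0$ to a matrix $P\in\SL_{d'}(\Z)$ with first row $q_0$ and entries bounded in terms of $d$ and $\cF(m)$, so that $(P\psi)_1 = q_0\cdot\psi$. We then split $P\psi = \rho + \sigma + \psi''$, where $\rho=(a/g,0,\dots,0)$ is $g$-rational, $\sigma$ is supported on the first coordinate with $|\sigma_1|<\cF(m)/N$, and $\psi''\in\{0\}\times\T^{d'-1}$. Pulling $\rho$ and $\sigma$ back to $\T^d$ through the inverse linear maps $L^{-1}$ and $P^{-1}$ — whose entries are controlled by Cramer's rule — contributes to $\theta_{\rat}$ a rational of denominator at most $\cF(m)$ and to $\theta_{\smth}$ an element of distance at most $\poly_d(m,\cF(m))/N$ from $0$; meanwhile $\psi''$ corresponds to an element of the subtorus $L^{-1}P^{-1}(\{0\}\times\T^{d'-1}\times\{0\}^{d-d'})$ of $\T^d$, which has dimension $d'-1$ and (composing $L$, $P$ and a coordinate permutation) complexity bounded in terms of $d$, $m$ and $\cF(m)$. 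We then recurse with this smaller torus and $\psi''$ in place of $T$ and $\theta'$.

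To make a single $M$ serve all three conclusions we fix once and for all a growth function $\mathcal{H}$, depending only on $d$ and $\cF$, which at each value $m$ dominates every quantity produced by the basic step at complexity $m$ (the new torus complexity, $\cF(m)^d$, $d$ times the distance constant, and $m$ itself), and we let the complexity at the $i$-th step be $M_{i+1}=\mathcal{H}(M_i)$, starting from $M_0=1$. When the iteration stops, at some step $k\leq d$, we set $M=M_k=\mathcal{H}^{(k)}(1)\ll_{d,\cF}1$. The accumulated rational piece $\theta_{\rat}$ is then a sum of at most $d$ rationals each of denominator at most $\cF(M_{k-1})\leq M$, hence $M$-rational; the accumulated smooth piece $\theta_{\smth}$ has distance at most $M/N$ from $0$ by the triangle inequality for the translation-invariant metric on $\T^d$; and $\theta_{\irrat}:=\theta-\theta_{\rat}-\theta_{\smth}$ is precisely the leftover element of the final torus $T_k$, which has complexity at most $M$ and in which $\theta_{\irrat}$ is $(\cF(M),N)$-irrational by the stopping criterion.

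The point that needs care is exactly this bookkeeping. Conclusion (3) asks for the leftover to be irrational at the demanding level $\cF(M)$ measured against the final complexity $M$, while conclusions (1) and (2) force $M$ to be large enough to absorb all the denominators and Lipschitz losses incurred along the way; the resolution is to let the complexity grow fast enough from step to step (this is the role of $\mathcal{H}$) that the final $M$ automatically dominates everything produced earlier, while the stopping level $\cF(M)$ remains exactly what we actually verified. The only external ingredient is the elementary lemma on completing a bounded primitive integer vector to a bounded element of $\SL_d(\Z)$.
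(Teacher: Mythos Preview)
Your proof is correct and follows essentially the same iteration as the paper: at each step, if the current residue fails to be $(\cF(M_i),N)$-irrational in its ambient subtorus, peel off a bounded smooth and a bounded rational correction and pass to a codimension-one subtorus, terminating in at most $d$ steps with complexity bounds tracked by a growth function depending only on $d$ and $\cF$. The only cosmetic difference is that you explicitly complete the primitive vector $q_0$ to a matrix $P\in\SL_{d'}(\Z)$ and read off the corrections in the new coordinates, whereas the paper chooses the smooth and rational corrections $\theta_{\smth}^{(i)\prime},\theta_{\rat}^{(i)\prime}$ directly (invoking the Euclidean algorithm in the same way) and defines the new subtorus by the equation $q'\cdot L(x)=0$; these are the same operation in slightly different packaging.
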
 
\begin{proof}
Starting with $M_0=1$, $\theta_{\smth}^{(0)} = \theta_{\rat}^{(0)} = 0$, $\theta_{\irrat}^{(0)} = \theta$, and $T_0 = \T^d$, suppose inductively that
\[
  \theta = \theta_{\smth}^{(i)} + \theta_{\rat}^{(i)} + \theta_{\irrat}^{(i)},
\]
where $\theta_{\smth}^{(i)}$ is $(M_i,N)$-smooth, $\theta_{\rat}^{(i)}$ is $M_i$-rational, and $\theta_{\irrat}^{(i)}$ lies in a subtorus $T_i$ of dimension $d-i$ and complexity $\leq M_i$.

If $\theta_{\irrat}^{(i)}$ is $(\cF(M_i),N)$-irrational in $T_i$ then we are done, so suppose that $L\in\SL_d(\Z)$ is a linear map of complexity $\leq M_i$ identifying $T$ with $\T^{d-i}$ and such that
\[
  \|q\cdot L(\theta_{\irrat}^{(i)})\|_\T \leq \frac{\cF(M_i)}{N}
\]
for some $q\in\Z^{d-i}\backslash\{0\}$ such that $\|q\|_1\leq\cF(M_i)$. Choose $\theta_{\smth}^{(i)\prime}\in T$ so that
\[
  \|q\cdot L(\theta_{\irrat}^{(i)} - \theta_{\smth}^{(i)\prime})\|_\T = 0
\]
and such that $d(L(\theta_{\smth}^{(i)\prime}),0) \leq \cF(M_i)/N$, so $d(\theta_{\smth}^{(i)\prime},0)\ll_{M_i, d,\cF} 1/N$. Let $q=mq^\prime$ where $m\in\Z^+$ and $q^\prime$ is primitive in $\Z^{d-i}$. Then
\[
  q^\prime\cdot L(\theta_{\irrat}^{(i)}-\theta_{\smth}^{(i)\prime})\in{\textstyle\frac{1}{m}}\Z.
\]
Now using the Euclidean algorithm, choose $\theta_{\rat}^{(i)\prime}\in T$ so that
\[
  q^\prime\cdot L(\theta_{\irrat}^{(i)}-\theta_{\smth}^{(i)\prime}-\theta_{\rat}^{(i)\prime})\in\Z
\]
and such that $m L(\theta_{\rat}^{(i)\prime}) = 0$, so that $m\theta_{\rat}^{(i)\prime} = 0$. Finally, let
\begin{align*}
  \theta_{\smth}^{(i+1)} &= \theta_{\smth}^{(i)} + \theta_{\smth}^{(i)\prime},\\
  \theta_{\rat}^{(i+1)} &= \theta_{\rat}^{(i)} + \theta_{\rat}^{(i)\prime},\\
  \theta_{\irrat}^{(i+1)} &= \theta_{\irrat}^{(i)} - \theta_{\smth}^{(i)\prime} - \theta_{\rat}^{(i)\prime},
\end{align*}
and choose $M_{i+1}\ll_{M_i,d,\cF} 1$ so that $\theta_{\smth}^{(i+1)}$ is $(M_{i+1},N)$-smooth, $\theta_{\rat}^{(i+1)}$ is $M_{i+1}$-rational, and the subtorus $T_{i+1} = \{x\in T_i : q^\prime\cdot L(x) = 0\}$ has complexity $\leq M_{i+1}$.

In the end note that $M_i\ll_{i, d,\cF} 1$, and since $T_i$ has dimension $d-i$ we can iterate this argument no more than $d$ times, so for some $i\leq d$ we must have that $\theta_{\irrat}^{(i)}$ is $(\cF(M_i),N)$-irrational in $T_i$.
\end{proof}

We can now state and prove the irrational version of the regularity lemma. This version improves on Theorem \ref{thm:nonirratareg} by giving $f_{\str}$ the structure
\[
  f_{\str}(n) = F(n/N,n\bmod q, \theta n),
\]
where
\[
  F:[0,1]\times\Z/q\Z\times\T^d\to\R,
\]
$q, d, \|F\|_{\Lip} \leq M$, and $\theta$ is $(\cF(M),N)$-irrational. Here we take the usual Euclidean metrics on $[0,1]$ and $\T^d$, the discrete metric on $\Z/q\Z$, the sum of these metrics on $[0,1]\times\Z/q\Z\times\T^d$, and then define $\|F\|_{\Lip}$ as before.

\begin{theorem}[$U^2$ regularity, irrational version]\label{thm:areg}
Let $f : [N] \to [0,1]$ be a function, $\cF$ a growth function, and $\eps>0$. Then there is a quantity $M\ll_{\eps,\cF} 1$ and a decomposition
\[
  f = f_{\str} + f_{\sml} + f_{\unf}
\]
of $f$ into functions $f_{\str}, f_{\sml}, f_{\unf} : [N] \to [-1,1]$ such that
\begin{enumerate}
  \item $f_{\str}(n) = F(n/N,n\bmod q, \theta n)$, where
\[
  F:[0,1]\times\Z/q\Z\times\T^d\to[0,1],
\]
$q,d,\|F\|_{\Lip} \leq M$, and $\theta\in\T^d$ is $(\cF(M),N)$-irrational,
  \item $f_{\sml}$ has $L^2([N])$ norm at most $\eps$,
  \item $f_{\unf}$ has $U^2([N])$ norm at most $1/\cF(M)$,
  \item $f_{\str}$ and $f_{\str} + f_{\sml}$ take values in $[0,1]$.
\end{enumerate}
\end{theorem}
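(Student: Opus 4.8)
The plan is to start from the non-irrational regularity lemma, Theorem~\ref{thm:nonirratareg}, applied with a suitably inflated growth function, and then post-process the structured part using the Ratner-type decomposition of the preceding theorem. Concretely, I would first run Theorem~\ref{thm:nonirratareg} with $\eps$ replaced by $\eps/2$ and with a growth function $\cF^\ast$ to be chosen at the end (it will need to majorise $\cF$ composed with the various complexity bounds produced below). This yields $M_0\ll_{\eps,\cF^\ast}1$, a decomposition $f = f_{\str}^{(0)} + f_{\sml} + f_{\unf}$, and a representation $f_{\str}^{(0)}(n) = F_0(\theta n)$ with $F_0:\T^{d}\to[0,1]$ and $d,\|F_0\|_{\Lip}\leq M_0$.

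Next I would apply the Ratner-type theorem to $\theta\in\T^d$ (with $N$, and a further inflated growth function $\cF^{\ast\ast}$) to write $\theta = \theta_{\smth} + \theta_{\rat} + \theta_{\irrat}$, where $\theta_{\smth}$ is $(M_1,N)$-smooth, $\theta_{\rat}$ is $M_1$-rational, and $\theta_{\irrat}$ is $(\cF^{\ast\ast}(M_1),N)$-irrational in a subtorus $T$ of complexity $\leq M_1$, with $M_1\ll_{M_0,d,\cF^{\ast\ast}}1$. Write $q$ for the common denominator of $\theta_{\rat}$, so $q\leq M_1$, and let $L\in\SL_d(\Z)$ of complexity $\leq M_1$ identify $T$ with $\T^{d'}\times\{0\}$, so that $\theta_{\irrat}$ corresponds to some $(\cF^{\ast\ast}(M_1),N)$-irrational $\psi\in\T^{d'}$. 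The idea is then to substitute $\theta n = \theta_{\smth}n + \theta_{\rat}n + \theta_{\irrat}n$ into $F_0$ and reorganise: $\theta_{\rat}n$ depends only on $n\bmod q$; $\theta_{\irrat}n$ is a linear image of $\psi n\in\T^{d'}$ via the transpose of $L$; and $\theta_{\smth}n$ is the delicate term, since it drifts linearly at speed $\ll M_1/N$. I would handle $\theta_{\smth}n$ by writing it as $(\theta_{\smth}N)(n/N)$, i.e.\ as a Lipschitz function (of Lipschitz norm $\ll M_1$ in each coordinate, because $\|\theta_{\smth}N\|\ll M_1$) of the variable $t=n/N\in[0,1]$. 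Assembling these, $f_{\str}^{(0)}(n) = F(n/N, n\bmod q, \psi n)$ for an explicit $F:[0,1]\times\Z/q\Z\times\T^{d'}\to[0,1]$ built by composing $F_0$ with an affine map in the continuous variables; one checks $\|F\|_{\Lip}\ll_{M_0,M_1}1$, and after possibly enlarging things we set $M$ to be a bound on $q$, $d'$, and $\|F\|_{\Lip}$ simultaneously, so $M\ll_{\eps,\cF}1$ once $\cF^\ast,\cF^{\ast\ast}$ are chosen appropriately. Conditions (2), (3), (4) are inherited verbatim from Theorem~\ref{thm:nonirratareg} (with $f_{\sml}$ unchanged and $f_{\unf}$ unchanged), so the only real content is item (1).

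The main obstacle is the circularity in the choice of growth functions: the final Lipschitz/dimension bound $M$ for $f_{\str}$ depends on $M_1$, which depends on $M_0$ and on the growth function fed to the Ratner theorem, while the uniformity bound we need is $1/\cF(M)$ and the smallness bound feeds back into what growth function we must have given to Theorem~\ref{thm:nonirratareg} in the first place. The standard resolution, which I would spell out carefully, is to choose the growth functions from the outside in: define $\cF^{\ast\ast}$ so that $\theta_{\irrat}$ is irrational to the degree $\cF$ demands at the final scale, i.e.\ $\cF^{\ast\ast}$ is chosen (depending on $\cF$, $\eps$, and the explicit complexity-tracking functions) so that $(\cF^{\ast\ast}(M_1),N)$-irrationality of $\psi$ implies $(\cF(M),N)$-irrationality of $\psi$ whenever $M$ is the output complexity; then define $\cF^\ast$ (depending on $\cF^{\ast\ast}$ and the Ratner complexity bounds) so that $1/\cF^\ast(M_0)\leq 1/\cF(M)$ for the resulting $M$; finally invoke Theorem~\ref{thm:nonirratareg} with $\cF^\ast$. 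Because each of these inflations only costs a bounded composition and the iteration counts are bounded in terms of $d\leq M_0\ll_{\eps,\cF^\ast}1$, the bookkeeping closes and $M\ll_{\eps,\cF}1$, completing the proof. The remaining steps---verifying the Lipschitz estimate for $F$, checking that truncation preserves the $[0,1]$ range, and confirming independence of the $U^2([N])$ definition from auxiliary choices---are routine.
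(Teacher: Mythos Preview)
Your proposal is correct and follows essentially the same route as the paper's proof: apply Theorem~\ref{thm:nonirratareg} with an inflated growth function, post-process $\theta$ via the Ratner-type decomposition, rewrite $F_0(\theta n)$ as a function of $(n/N,\,n\bmod q,\,\psi n)$ using $\theta_{\smth}n = (N\theta_{\smth})(n/N)$, and close the bookkeeping by choosing the auxiliary growth functions from the outside in. Two cosmetic remarks: $\theta_{\irrat}n$ is recovered from $\psi n$ via $L^{-1}$ rather than $L^T$, and the halving of $\eps$ is unnecessary since $f_{\sml}$ is carried over unchanged from the first decomposition.
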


\begin{proof}
Let $\cF_1$ and $\cF_2$ be growth functions depending on $\eps$ and $\cF$ in a manner to be determined. By Theorem~\ref{thm:nonirratareg} there exists $M_1\ll_{\eps,\cF_1} 1$ and a decomposition
\[
  f = f_{\str} + f_{\sml} + f_{\unf}
\]
of $f$ into functions $f_{\str}, f_{\sml}, f_{\unf} : [N] \to [-1,1]$ such that
\begin{enumerate}
  \item $f_{\str}(n) = F(\theta n)$, where $F:\T^d\to[0,1]$, $d,\|F\|_{\Lip} \leq M_1$, and $\theta\in\T^d$,
  \item $f_{\sml}$ has $L^2([N])$ norm at most $\eps$,
  \item $f_{\unf}$ has $U^2([N])$ norm at most $1/\cF_1(M_1)$, and
  \item $f_{\str}$ and $f_{\str} + f_{\sml}$ take values in $[0,1]$.
\end{enumerate}
Now by the previous theorem we can find $M_2\ll_{M_1,\cF_2} 1$ such that $M_2\geq M_1$ and such that $\theta$ decomposes as
\[
  \theta = \theta_{\smth} + \theta_{\rat} + \theta_{\irrat},
\]
where
\begin{enumerate}
  \item $\theta_{\smth}$ is $(M_2,N)$-smooth, meaning $d(\theta_{\smth},0) \leq \frac{M_2}{N}$,
  \item $\theta_{\rat}$ is $M_2$-rational, meaning $q\theta_{\rat}=0$ for some $q\leq M_2$, and
  \item $\theta_{\irrat}$ is $(\cF_2(M_2),N)$-irrational in a subtorus of complexity $\leq M_2$.
\end{enumerate}
Then
\[
  F(\theta n) = F(\theta_{\smth} n + \theta_{\rat} n + \theta_{\irrat} n) = \tilde F(n/N,n\bmod q,nL(\theta_{\irrat})),
\]
where $\tilde F:[0,1]\times\Z/q\Z\times\T^{d^\prime}\to[0,1]$ is defined by
\[
  \tilde F(x,y,z) = F(N\theta_{\smth}x + \theta_{\rat}y + L^{-1}(z)).
\]

Noting that $\|\tilde F\|_{\Lip} \ll_{M_2} 1$, we can find $M\ll_{M_2} 1$ exceeding both $M_2$ and $\|\tilde F\|_{\Lip}$. But since $M\ll_{M_2} 1$, if $\cF_2$ is sufficiently large depending on $\cF$ then $\cF_2(M_2)\geq\cF(M)$, and similarly $M_2\ll_{M_1,\cF_2} 1$, so if $\cF_1$ is sufficiently large depending on $\cF_2$ then $\cF_1(M_1)\geq \cF_2(M_2)\geq \cF(M)$. After all these dependencies are fixed we have $M\ll_{\eps,\cF} 1$, and the conclusion of the theorem holds.
\end{proof}

In applications one typically combines the arithmetic regularity lemma with some sort of counting lemma such as the following. As mentioned already, if $\theta \in \T^d$ is highly irrational (i.e., $(A,N)$-irrational for large $A$), then the sequence $\theta n$ is highly equidistibuted over $\T^d$ as $n$ ranges over long progressions. This allows us to relate counts weighted by $f_{\str}$ to integrals of $F$.

\begin{lemma}\label{distribution-integral-a}
Suppose that $\theta \in \T^d$ is $(A, N)$-irrational, and let $F : \T^d \to \C$ be a function with Lipschitz constant at most $M$. Suppose that $P \subset \{1,\dots, N\}$ is an arithmetic progression of length at least $\eta N$. Then, provided that $A > A_0(M, d, \eta, \delta)$ is large enough,
\[
  \left|\E_{n \in P} F(\theta n) - \int F \, d\mu \right| \leq \delta.
\]
\end{lemma}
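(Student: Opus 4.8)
The plan is a standard Weyl-type equidistribution argument. I would expand $F$ into Fourier characters on $\T^d$, note that the constant mode contributes exactly $\int F\,d\mu$, and bound the contribution of each nonconstant mode using the irrationality of $\theta$. The one point that needs care is arranging that the irrationality hypothesis --- which controls $\|q\cdot\theta\|_\T$ only for $q$ of bounded $\ell^1$-norm --- is actually applicable to the frequencies that arise.

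First some reductions. Applying the definition of $(A,N)$-irrationality with $q$ a standard basis vector and using $\|q\cdot\theta\|_\T\le\tfrac12$ forces $N\ge 2A$; since $A_0$ may be taken large, we may assume throughout that $N$ is as large as convenient, in particular $N\ge 2/\eta$. Write $P=\{a+\ell r:0\le\ell<L\}$ with $L=|P|\ge\eta N$ and common difference $r\ge 1$. Since $P\subseteq\{1,\dots,N\}$ we have $(L-1)r\le N$, hence $r\le N/(L-1)\le 2/\eta$: the common difference of $P$ is bounded purely in terms of $\eta$. This is what makes the argument go through.

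Next I would replace $F$ by a trigonometric polynomial. Convolving $F$ with a Fej\'er kernel on $\T^d$ of suitable degree, or mollifying by a fixed smooth bump at a small scale, produces $G(x)=\sum_{\|q\|_\infty\le K}\widehat G(q)e(q\cdot x)$ with $K=K(M,d,\delta)$ and $\|F-G\|_\infty\le\delta/3$; since $\|F\|_{\Lip}\le M$, we also get $\|G\|_\infty\le M+1$ (assuming, as we may, that $\delta\le 1$) and so $|\widehat G(q)|\le M+1$ for every $q$. Because $\widehat G(0)=\int G\,d\mu$, the triangle inequality gives
\[
  \Big|\E_{n\in P}F(\theta n)-\int F\,d\mu\Big|\le\frac{2\delta}{3}+(2K+1)^d(M+1)\max_{0<\|q\|_\infty\le K}\Big|\E_{n\in P}e(q\cdot\theta n)\Big|.
\]
It remains to bound $\E_{n\in P}e(q\cdot\theta n)$ for a fixed $q$ with $0<\|q\|_\infty\le K$. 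Summing the geometric progression over $n\in P$,
\[
  \Big|\E_{n\in P}e(q\cdot\theta n)\Big|=\frac1L\Big|\sum_{\ell=0}^{L-1}e\big(\ell\,(qr)\cdot\theta\big)\Big|\le\frac{1}{2L\,\|(qr)\cdot\theta\|_\T}.
\]
Here $qr\in\Z^d\setminus\{0\}$ and $\|qr\|_1=r\|q\|_1\le(2/\eta)dK$, so provided $A\ge 2dK/\eta$ the $(A,N)$-irrationality of $\theta$ gives $\|(qr)\cdot\theta\|_\T\ge A/N$, whence $|\E_{n\in P}e(q\cdot\theta n)|\le N/(2LA)\le 1/(2\eta A)$. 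Substituting, the left-hand side above is at most $\tfrac{2\delta}{3}+(2K+1)^d(M+1)/(2\eta A)$, which is $\le\delta$ once $A_0(M,d,\eta,\delta)$ is chosen large enough: first fix $K=K(M,d,\delta)$, then take $A_0\ge 2dK/\eta$ so large that the final term is $\le\delta/3$.

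I do not anticipate a genuine obstacle here --- it is elementary Fourier analysis --- but the step most easily overlooked is the reduction showing $r\le 2/\eta$. Without it, the frequency $qr$ appearing in the geometric sum could have $\ell^1$-norm of order $N$, far outside the range in which $(A,N)$-irrationality provides any lower bound on $\|(qr)\cdot\theta\|_\T$, and the estimate would collapse. The other routine ingredients (the Fej\'er approximation of a Lipschitz function by a bounded-degree trigonometric polynomial, and the elementary bound on a geometric sum) I would invoke without further comment.
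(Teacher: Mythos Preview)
Your proof is correct and follows essentially the same route as the paper's: approximate $F$ by a trigonometric polynomial of bounded degree, reduce to bounding the exponential sums $\E_{n\in P}e(q\cdot\theta n)$ via the geometric series, and then invoke $(A,N)$-irrationality on the vector $qr$ (equivalently $hm$ in the paper's notation), using the crucial observation that the common difference $r$ of $P$ is $\le 2/\eta$. Your write-up is in fact slightly more explicit than the paper's on this last point and on the preliminary observation that $N\ge 2A$, but the arguments are otherwise identical.
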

\begin{proof}
The key here (as usual in equidistribution theory) is to take a Fourier expansion of $F$ and truncate it. In particular, we may find $M_0 = O_{M, d, \delta}(1)$ and coefficients $c_m$ with $c_0 = \int F$ and $c_m = O_{M,d}(1)$ for $m\neq 0$ such that
\[
  \left| F(x) - \sum_{\|m\|_1 \leq M_0} c_m e(m \cdot x) \right| \leq \delta/2
\]
uniformly in $x$. For a proof, see for example \cite[Lemma A.9]{green-tao-quadraticuniformity}. It follows, of course, that 
\[
  \left| \E_{n \in P} F(\theta n) - \int F\, d\mu \right| \leq  \sum_{\|m\|_1 \leq M_0, m \neq 0} |c_m| | \E_{n \in P} e(m \cdot \theta n)| + \frac{\delta}{2}.
\]
Thus we need only show that 
\[
  \E_{n \in P} e(m \cdot \theta n) = o_{m, \eta; A \rightarrow \infty}(1),
\]
and then take $A$ sufficiently large. If the common difference of the arithmetic progression $P$ is $h$, then by summing the geometric progression we have the bound
\[
  \E_{n \in P} e(m \cdot \theta n) \ll \frac{1}{\eta N \| (m \cdot \theta) h \|_\T}.
\]
But if $A > |h|\|m\|_1$ then, by the definition of $(A,N)$-irrationality, $\| (m \cdot \theta) h \|_\T \geq A/N$. Since $h \leq 2\eta^{-1}$, the result follows immediately. 
\end{proof}

If $f_{\str}$ has the structure given by Theorem~\ref{thm:areg} and $\cF$ grows sufficiently rapidly, then the triple $(n/N, n\bmod q, n\theta)$ is highly equidistributed over $[0,1]\times \Z/q\Z \times \T^d$ as $n$ ranges over $\{1,\dots,N\}$. Thus we have the following slightly more involved counting lemma, which is proved in essentially the same way.

\begin{lemma}\label{distribution-integral}
Suppose that $\theta \in \T^d$ is $(A, N)$-irrational. Let $q \in \N$, and let $F : [0,1]\times \Z/q\Z \times \T^d \rightarrow \C$ be a function with Lipschitz constant at most $M$. Let $\delta > 0$ be arbitrary. Then, provided that $A > A_0(M, q, d, \delta)$ and $N>N_0(M,q,d,\delta)$ are large enough,
\[ 
  \left| \E_{n \leq N} F(n/N, n \bmod{q}, \theta n) - \int F \, d\mu \right| \leq \delta.
\]
\end{lemma}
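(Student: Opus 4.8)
The plan is to mimic the proof of Lemma~\ref{distribution-integral-a}, the only new ingredients being the extra coordinates $n/N$ and $n\bmod q$, which contribute a finite abelian-group character (from $\Z/q\Z$) and a continuous Fourier mode (from $[0,1]$) that we must also equidistribute. First I would take a joint Fourier expansion of $F$. On the compact abelian group $[0,1]\times\Z/q\Z\times\T^d$ (where the first factor is really $\T$ once we restrict to the range of $n/N$, but it is cleaner to periodise), the characters are indexed by triples $(k,j,m)\in\Z\times\Z/q\Z\times\Z^d$, and we may truncate: there is $M_0=O_{M,q,d,\delta}(1)$ and coefficients $c_{k,j,m}$ with $c_{0,0,0}=\int F\,d\mu$ and $c_{k,j,m}=O_{M,q,d}(1)$ such that $\bigl|F(x,y,z)-\sum_{|k|,|j|,\|m\|_1\le M_0}c_{k,j,m}e(kx)e(jy/q)e(m\cdot z)\bigr|\le\delta/2$ uniformly; this follows by smoothing/convolving exactly as in the reference \cite[Lemma A.9]{green-tao-quadraticuniformity} applied factorwise.

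Substituting $x=n/N$, $y=n\bmod q$, $z=\theta n$ and averaging over $n\le N$, the main term $c_{0,0,0}$ reproduces $\int F\,d\mu$, and it remains to bound, for each nonzero triple $(k,j,m)$ with $|k|,|j|,\|m\|_1\le M_0$, the exponential sum
\[
  \E_{n\le N} e\!\left(\frac{kn}{N}\right) e\!\left(\frac{jn}{q}\right) e(m\cdot\theta n)
  = \E_{n\le N} e\!\left(\Bigl(\frac{k}{N}+\frac{j}{q}+m\cdot\theta\Bigr) n\right),
\]
which is a genuine geometric progression with phase $\beta := k/N + j/q + m\cdot\theta$, hence bounded by $O\bigl(\min(1,(N\|\beta\|_\T)^{-1})\bigr)$. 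So the whole problem reduces to showing $\|\beta\|_\T\gg 1/N$ (with implied constant allowed to depend on $q,M_0$) whenever $(k,j,m)\ne(0,0,0)$.

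The main obstacle is precisely this lower bound on $\|\beta\|_\T$, and here one must split into cases. If $m\ne 0$: since $\|m\|_1\le M_0$ and $\theta$ is $(A,N)$-irrational with $A>A_0(M,q,d,\delta)$ chosen larger than $q M_0$, we get $\|q(m\cdot\theta)\|_\T\ge A/N$ — but $q\beta = qk/N + j + q(m\cdot\theta)$ differs from $q(m\cdot\theta)$ by $qk/N$ (an integer $j$ is killed mod $1$), so $\|q\beta\|_\T\ge A/N - |k|q/N\ge (A-qM_0)/N\gg_{q} 1/N$, hence $\|\beta\|_\T\gg_{q}1/N$. If $m=0$ but $j\ne 0$: then $\|\beta\|_\T = \|k/N + j/q\|_\T$, and since $0<|j|<q$ the fraction $j/q$ is at distance $\ge 1/q$ from $\Z$, so for $N>N_0$ large (say $N>2qM_0$, absorbing $|k|/N\le M_0/N\le 1/(2q)$) we have $\|\beta\|_\T\ge 1/(2q)\gg_q 1$. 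If $m=0$ and $j=0$ but $k\ne 0$: then $\beta = k/N$ with $0<|k|\le M_0$, so $\|\beta\|_\T\ge 1/N$ outright (using $N>2M_0$ so there is no wraparound). In every case $\|\beta\|_\T\gg_{q,M_0}1/N$, so $\E_{n\le N}e(\beta n)\ll_{q,M_0}1/(N\cdot(1/N))$... wait, that is $O(1)$, so one instead keeps the bound $\ll (N\|\beta\|_\T)^{-1}$: in the first case this is $\ll_q A^{-1}=o_{A\to\infty}(1)$, while in the second and third cases $N\|\beta\|_\T\gg_q N\to\infty$ (using $N>N_0$), so each term is $o(1)$. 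Summing the $O_{M,q,d,\delta}(1)$ many terms and choosing $A$ and $N$ large enough makes the total at most $\delta/2$, completing the estimate.

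Finally I would assemble the pieces: $\bigl|\E_{n\le N}F(n/N,n\bmod q,\theta n)-\int F\,d\mu\bigr|\le \delta/2 + \sum_{(k,j,m)\ne 0,\ |k|,|j|,\|m\|_1\le M_0}|c_{k,j,m}|\,\bigl|\E_{n\le N}e(\beta n)\bigr|\le \delta/2+\delta/2=\delta$, provided $A>A_0(M,q,d,\delta)$ and $N>N_0(M,q,d,\delta)$, exactly as claimed. The only slightly delicate bookkeeping is ensuring the Fourier truncation is uniform across the mixed discrete/continuous group and that the dependence of $A_0,N_0$ on $q$ is legitimate; both are routine.
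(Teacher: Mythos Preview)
Your approach --- truncated Fourier expansion followed by a case split on the resulting exponential sums --- is exactly the paper's strategy, but two steps need repair.

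First, the periodisation of the $[0,1]$-variable is not justified, and you flag but do not resolve this. The function $F$ is Lipschitz on $[0,1]\times\Z/q\Z\times\T^d$, but in general $F(0,\cdot,\cdot)\neq F(1,\cdot,\cdot)$, so its $1$-periodic extension to $\T$ is discontinuous and the quantitative Fej\'er-type bound (uniform approximation by $O_{M,q,d,\delta}(1)$ characters) fails: Fourier partial sums of a function with a jump do not converge uniformly. The paper's fix is to extend $F$ to an $M$-Lipschitz \emph{even} function on $[-1,1]\times\Z/q\Z\times\T^d$, which is genuinely $2$-periodic and Lipschitz, and then expand in the cosine-type basis
\[
\phi_{k,a,m}(x,y,z)=e\Bigl(\tfrac{k}{2}x+\tfrac{a}{q}y+m\cdot z\Bigr)+e\Bigl(-\tfrac{k}{2}x+\tfrac{a}{q}y+m\cdot z\Bigr).
\]
This even-extension trick is the one new ingredient beyond Lemma~\ref{distribution-integral-a}.

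Second, your case $m=0$, $j=0$, $k\neq 0$ is miscomputed: with $\beta=k/N$ and $|k|\le M_0$ one has $N\|\beta\|_\T=|k|\le M_0$, not $\gg_q N$, so the geometric-sum bound yields only $O(1)$. In your setup this is actually harmless, since $\sum_{n=1}^N e(kn/N)=0$ exactly for integers $0<|k|<N$; in the paper's setup the corresponding phase is $\pm k/(2N)$, and a short direct computation gives $\E_{n\le N}\bigl(e(kn/(2N))+e(-kn/(2N))\bigr)=O(1/N)$, which is what is used there.
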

\begin{proof}[Proof sketch]
Again the idea is to take a truncated Fourier expansion of $F$, but because $F|_{\{0\}\times \Z/q\Z \times \T^d}$ and $F|_{\{1\}\times \Z/q\Z \times \T^d}$ need not agree the expansion looks a little more complicated. However, $F$ can be extended to an $M$-Lipschitz function $[-1,1]\times \Z/q\Z\times \T^d\to\C$ such that $F(x,y,z)=F(-x,y,z)$, so $F$ may be approximated by a sum of the functions $\phi_{k,a,m}$ given by
\begin{equation}\label{phi}
  \phi_{k,a,m}(x,y,z) = e\(\frac{m}{2}x + \frac{a}{q}y + m \cdot z\) + e\(- \frac{m}{2} x + \frac{a}{q}y + m \cdot z\),
\end{equation}
where $k\in\Z, a \in \Z/q\Z, m \in \Z^d$. Then just as in the proof of the previous lemma we need only check that 
\begin{equation}\label{to-check-22}
  \E_{n \leq N} \phi_{k,a, m} (n/N, n \bmod q, \theta n) = o_{k, a, m, q; A,N \to \infty}(1)
\end{equation}
provided that $k,a,m$ are not all zero. Substituting in, the left-hand side is 
\begin{equation}\label{to-check-3}
  \E_{n \leq N} \(e\(\(\frac{k}{2N} + \frac{a}{q} + m \cdot \theta\) n  \) + e\(\(- \frac{k}{2N} + \frac{a}{q} + m \cdot \theta\) n  \)\).
\end{equation}
Summing the geometric progressions, we see that this is bounded by $\eps$ unless
\begin{equation}\label{to-use-9}
  \left\|\pm \frac{k}{2N} + \frac{a}{q} + m \cdot \theta \right\|_\T \ll \frac{1}{N\eps}
\end{equation}
for either choice of sign.

Supposing first that $m \neq 0$, inequality \eqref{to-use-9} implies
\[
  \left\| \pm\frac{mk}{2N} + q m \cdot \theta \right\|_\T \ll \frac{q}{N\eps},
\]
and hence
\[
  \left \| m' \cdot \theta \right \|_\T \ll \frac{q}{N\eps} + \frac{mq}{2N},
\]
where $m' = qm$. But if $A$ is sufficiently large in terms of $\eps, q,k,m$, this is contrary to the $(A,N)$-irrationality of $\theta$.

Hence suppose that $m = 0$. Then if $N$ is large enough depending on $m$ and $q$, \eqref{to-use-9} implies that $a = 0$. Thus $a = m = 0$, so $k \neq 0$. But then the expression \eqref{to-check-3} is
\[
  \E_{n \leq N}\(e(kn/2N) + e(-kn/2N)\) = O_k(1/N),
\]
so \eqref{to-check-22} certainly follows in this case as well.
\end{proof}

\bibliography{arithreg}
\bibliographystyle{alpha}

\end{document}